\documentclass[a4paper]{article}
\usepackage{fullpage}
\usepackage{graphicx}

\usepackage{microtype}

\usepackage{amssymb}
\usepackage{amsmath}
\usepackage{amsthm}

\usepackage{xspace}
\usepackage{color}
\usepackage[shortlabels]{enumitem}

\newtheorem{theorem}{Theorem}
\newtheorem{lemma}[theorem]{Lemma}

\newcommand{\Reals}{\mathbb{R}}
\newcommand{\eps}{\varepsilon}
\newcommand{\Km}{K_\mu}
\newcommand{\surf}[1]{\mathrm{surf}(#1)}
\newcommand{\peri}[1]{\mathrm{peri}(#1)}
\newcommand{\vol}[1]{\mathrm{vol}(#1)}
\newcommand{\Ps}{P^{\mu}}
\newcommand{\Rs}{R^{\mu}}

\newcommand{\KK}{\mathfrak{K}}

\let\leq\leqslant
\let\geq\geqslant

\usepackage{hyperref}
\hypersetup{%
  breaklinks,%
  colorlinks=true,%
  linkcolor=[rgb]{0.45,0.0,0.0},%
  citecolor=[rgb]{0,0,0.45}
}

\makeatletter
\partopsep\z@ \textfloatsep 10pt plus 1pt minus 4pt
\def\section{\@startsection {section}{1}{\z@}%
  {-3.5ex plus -1ex
    minus -.2ex}{2.3ex plus .2ex}{\large\bf}}
\def\subsection{\@startsection{subsection}{2}%
  {\z@}{-3.25ex plus
    -1ex minus -.2ex}{1.5ex plus .2ex}{\normalsize\bf}}
\def\@fnsymbol#1{\ensuremath{\ifcase#1\or *\or 1\or 2\or 3\or 4\or
    5\or 6\or 7 \or 8\ or 9 \or 10\or 11 \else\@ctrerr\fi}}
\makeatother

\bibliographystyle{plain}

\title{The Inverse Kakeya Problem%
  \thanks{This work was initiated at the 21st Korean Workshop on
    Computational Geometry, held in Rogla, Slovenia, in June 2018. We
    thank all workshop participants for their helpful comments.}}

\author{Sergio Cabello%
  \thanks{Faculty of Mathematics and Physics, University of Ljubljana, and IMFM, Slovenia}
  \and
  Otfried Cheong%
  \thanks{KAIST, Daejeon, Korea}
  \and
  Michael Gene Dobbins%
  \thanks{Department of Mathematical Sciences, Binghamton University,
  Binghamton, NY, USA}}

\begin{document}

\maketitle

\begin{abstract}
  We prove that the largest convex shape that can be placed inside a
  given convex shape~$Q \subset \Reals^{d}$ in any desired orientation
  is the largest inscribed ball of~$Q$.  The statement is true both
  when ``largest'' means ``largest volume'' and when it means
  ``largest surface area''.  The ball is the unique solution, except 
  when maximizing the perimeter in the two-dimensional case.
\end{abstract}

\section{Introduction}
\label{sec:intro}

The well-known Kakeya problem, originally asked by Soichi Kakeya in
1917, is the following question: What is the minimum area region~$Q$
in the plane in which a needle of length~$1$ can be turned through
$360^{\circ}$ continuously, and return to its initial
position~\cite{kakeya1917}?  When $Q$ is required to be convex, the
answer is the equilateral triangle of height one~\cite{pal1921}.  For
general~$Q$, however, Besicovitch showed that a region~$Q$ of measure
zero exists~\cite{besicovitch1920,besicovitch1928}.  Kakeya-type
problems have received considerable attention in the literature, as
there are strong connections to problems in number
theory~\cite{Bourgain2000}, geometric combinatorics~\cite{Wolff1999},
arithmetic combinatorics~\cite{Laba2008}, oscillatory integrals, and
the analysis of dispersive and wave equations~\cite{tao01}.

Being able to rotate a needle through~$360^{\circ}$ inside~$Q$ clearly
implies that it can be placed in~$Q$ in any desired orientation.  Bae
et al.~\cite{bae_et_al:LIPIcs:2018:8719} showed that the converse
holds more generally for convex shapes in the plane: If a planar
convex compact shape~$P$ can be placed in a planar convex compact
shape~$Q$ in any desired orientation, then~$P$ can also be rotated
through~$360^{\circ}$ inside~$Q$.  A natural generalization of
Kakeya's problem is therefore to ask, given a planar convex compact
shape~$P$, what is the minimum area convex shape~$Q$ such that~$P$ can
be placed in~$Q$ in any desired orientation.  This problem still seems
to be wide open, the answer is not even known when~$P$ is an
equilateral triangle or a square.

In this short note, we consider the inverse of this question: We are
given a convex compact shape~$Q \subset \Reals^{d}$, $d\geq 2$, 
and we ask: what is the largest shape~$P$ that can be 
placed in~$Q$ in any desired orientation?

We show that, independent of the shape of~$Q$, the answer is always a
spherical ball, and therefore~$P$ is the largest inscribed ball
of~$Q$.  The result is true both for maximizing the volume of~$P$ and
for maximizing the surface area of~$P$.  The answer is always unique,
\emph{except} when maximizing the perimeter of~$P$ in the planar case.
For instance, inside a unit square both a unit-diameter disk and a
unit-diameter Reauleaux-triangle can be turned.  Both have the same
perimeter, but the disk has larger area.

The result for maximizing the volume is a consequence of the
well-known Brunn-Minkowski theorem.  For the surface-area result, we
make use of the generalized Brunn-Minkowski theorem, a theorem that
deserves to be better known.  This proof does not cover the planar
case, so we give an elementary proof based on Minkowski sums.

Our characterization also solves the computational question of
computing the largest convex~$P$ that can be placed in any desired
orientation in a given convex polyhedron~$Q \subset \Reals^d$, since
the largest inscribed ball can be computed efficiently as a linear
program.

For completeness, let us observe that the case~$d=1$ is simple and has
the same behavior as~$d=2$. Indeed, in~$\Reals$ the convex shapes are
segments, there is a unique longest segment that can be placed inside
a segment~$Q$ in both orientations, namely~$Q$ itself, and all
segments of positive length inside~$Q$ have a boundary of the same
size, namely two points.

On the other hand, if we ask for the $P$ that maximizes the
\emph{diameter}, then the answer is different: it is a line segment
whose length is the smallest width of~$Q$.  In general, this is longer
than the diameter of the largest inscribed ball, for instance when
$Q$~is an equilateral triangle.

\section{Minkowski sums}

For two convex shapes $P$ and~$Q$ in~$\Reals^{d}$, the Minkowski sum
$P + Q$ is the set $\{p + q \mid p \in P, \, q \in Q\}$.  Minkowski
sums allow us to interpolate between two convex shapes~$P_0$
and~$P_1$: for~$0 \leq \lambda \leq 1$, we define~$P_{\lambda} :=
(1-\lambda) P_0 + \lambda P_1$.

\begin{lemma}
  \label{lem:fits}
  Let $Q$ be a convex shape in~$\Reals^{d}$, let $P_0, P_1 \subseteq
  Q$ be convex shapes, and let $0 \leq \lambda \leq 1$.
  Then~$P_\lambda \subseteq Q$.
\end{lemma}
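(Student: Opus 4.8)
The plan is to argue pointwise, reducing the containment to the defining property of convexity of~$Q$. I would take an arbitrary point $x \in P_\lambda$ and simply unwind the definitions of scalar multiples and Minkowski sums: by construction $P_\lambda = (1-\lambda) P_0 + \lambda P_1$, so every such~$x$ can be written as $x = (1-\lambda) p_0 + \lambda p_1$ for some $p_0 \in P_0$ and some $p_1 \in P_1$.

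Next I would use the hypotheses $P_0 \subseteq Q$ and $P_1 \subseteq Q$ to conclude that both $p_0 \in Q$ and $p_1 \in Q$. Since $0 \leq \lambda \leq 1$, the expression $(1-\lambda) p_0 + \lambda p_1$ is a convex combination of two points of~$Q$, and therefore the convexity of~$Q$ gives $x \in Q$ directly. As $x$ was an arbitrary point of $P_\lambda$, this establishes $P_\lambda \subseteq Q$.

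I do not expect any genuine obstacle here: the statement is essentially a restatement of convexity once the Minkowski-sum notation is expanded, and the whole argument is a single chain of implications. The only point worth stating carefully is the convention that for a scalar $\alpha \geq 0$ we write $\alpha P := \{\alpha p \mid p \in P\}$, so that $(1-\lambda) P_0 + \lambda P_1 = \{(1-\lambda) p_0 + \lambda p_1 \mid p_0 \in P_0,\, p_1 \in P_1\}$; with this in hand the proof is immediate. The conceptual content that matters for the rest of the paper is that Minkowski interpolation between two shapes contained in~$Q$ never leaves~$Q$, even though the proof itself is a one-liner.
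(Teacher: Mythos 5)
Your proof is correct and is essentially identical to the paper's: both take an arbitrary point of $P_\lambda$, write it as $(1-\lambda)p_0 + \lambda p_1$ with $p_0 \in P_0 \subseteq Q$ and $p_1 \in P_1 \subseteq Q$, and invoke the convexity of~$Q$. Your extra remark spelling out the scalar-multiple and Minkowski-sum conventions is fine but adds nothing beyond what the paper leaves implicit.
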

\begin{proof}
  Let $p \in P_\lambda$.  Then $p = (1-\lambda) p_0 + \lambda p_1$,
  with $p_0 \in P_0 \subseteq Q$ and $p_1 \in P_1 \subseteq Q$.  Since
  $p_0, p_1 \in Q$ and $Q$ is convex, $p \in Q$.
\end{proof}

For a convex shape~$Q \subset \Reals^{d}$, we introduce~$\KK(Q)$ as
the family of all convex shapes~$P \subset \Reals^{d}$ that can be
placed in~$Q$ in any desired orientation.
Lemma~\ref{lem:fits} immediately implies the following:
\begin{lemma}
  \label{lem:freely}
  Let $Q$ be a convex shape in~$\Reals^{d}$, let $P_0, P_1 \in
  \KK(Q)$, and let $0 \leq \lambda \leq 1$.  Then~$P_\lambda \in
  \KK(Q)$.
\end{lemma}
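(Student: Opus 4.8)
The plan is to reduce the statement directly to Lemma~\ref{lem:fits}, exploiting the fact that a rotation acts as a linear map and therefore commutes with both scalar multiplication and the Minkowski sum. First I would fix an arbitrary orientation, which I think of as a rotation~$R$, and unpack the hypothesis $P_0, P_1 \in \KK(Q)$: by definition there exist translation vectors~$v_0$ and~$v_1$ such that $R P_0 + v_0 \subseteq Q$ and $R P_1 + v_1 \subseteq Q$. The goal becomes exhibiting a single translation that places~$R(P_\lambda)$ inside~$Q$.

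Next I would apply Lemma~\ref{lem:fits} to the two convex shapes $R P_0 + v_0$ and $R P_1 + v_1$, both of which lie inside~$Q$. The lemma gives $(1-\lambda)(R P_0 + v_0) + \lambda (R P_1 + v_1) \subseteq Q$. The remaining work is purely algebraic: I would expand the left-hand side using that scaling distributes over a translate and that the Minkowski sum is associative and commutative, collecting the translation part into a single vector $w := (1-\lambda) v_0 + \lambda v_1$. This rewrites the left-hand side as $\bigl[(1-\lambda) R P_0 + \lambda R P_1\bigr] + w$.

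Finally I would invoke the linearity of~$R$ to identify the bracketed set with $R$ applied to~$P_\lambda$: since $R$ is linear, $R(P_\lambda) = R\bigl((1-\lambda) P_0 + \lambda P_1\bigr) = (1-\lambda) R P_0 + \lambda R P_1$. Hence $R(P_\lambda) + w \subseteq Q$, which is precisely the statement that $P_\lambda$ can be placed in~$Q$ in orientation~$R$. Since $R$ was arbitrary, $P_\lambda \in \KK(Q)$.

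The one point requiring any care --- and the step I expect to be the main obstacle, though it is a minor one --- is the interchange of the rotation with the Minkowski interpolation, i.e.\ verifying that a linear map commutes with scalar multiples and Minkowski sums of sets, and observing that it is the \emph{same} rotation~$R$ applied to both shapes while the two translations are allowed to differ. Once this is noted the argument is immediate: the geometric content is carried entirely by Lemma~\ref{lem:fits}, the rotation passes through unchanged, and the two witnessing translations simply combine convexly into~$w$.
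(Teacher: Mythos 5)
Your proposal is correct and is essentially the paper's own proof: both fix an arbitrary rotation, take the convex combination $w = (1-\lambda)v_0 + \lambda v_1$ of the witnessing translations, use the linearity of the rotation to commute it past the Minkowski interpolation, and conclude by applying Lemma~\ref{lem:fits} to the two translated rotated copies sitting inside~$Q$. The only difference is cosmetic (the paper rewrites $\rho P_\lambda + t$ as the convex combination and then cites Lemma~\ref{lem:fits}, while you apply the lemma first and then unfold the algebra), so there is nothing to change.
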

\begin{proof}
  Consider an arbitrary rotation~$\rho$.  We need to argue that there
  exists a translation~$t$ such that $\rho P_\lambda + t \subset Q$.
  By assumption, there are translations $t_0$ and $t_1$ such that
  $\rho P_0 + t_0 \subset Q$ and $\rho P_1 + t_1 \subset Q$.  Setting
  $t = (1-\lambda) t_0 + \lambda t_1$, we have
  \[
  \rho P_\lambda + t
  = \rho \big((1-\lambda) P_0 + \lambda P_1 \big) + (1-\lambda) t_0 + \lambda t_1
  = (1-\lambda) (\rho P_0 + t_0) + \lambda (\rho P_1 + t_1).
  \]
  Now Lemma~\ref{lem:fits} implies the claim.
\end{proof}

We denote the $d$-dimensional volume of convex shape~$P$
by~$\psi_0(P)= \vol{P}$, and the $d-1$-dimensional volume of the
boundary of~$P$ by~$\psi_1(P)= \surf{P}$.  The key ingredient for our
proof is the following lemma:
\begin{lemma}
  \label{lem:halfway}
  Let $P_0, P_1 \subset \Reals^{d}$ be convex shapes with $\psi_w(P_0)
  = \psi_w(P_1)$, for $w \in \{0, 1\}$ and $d \geq 2 + w$.  Then
  $\psi_w(P_{1/2}) \geq \psi_w(P_0)$, and equality holds only when
  $P_0$ and $P_1$ are homothets.
\end{lemma}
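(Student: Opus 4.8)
The plan is to deduce both cases from a single Brunn--Minkowski--type inequality, namely that the functional $P \mapsto \psi_w(P)^{1/(d-w)}$ is concave with respect to Minkowski interpolation. Concretely, I would establish
\[
  \psi_w\big((1-\lambda) P_0 + \lambda P_1\big)^{1/(d-w)}
  \geq (1-\lambda)\,\psi_w(P_0)^{1/(d-w)} + \lambda\,\psi_w(P_1)^{1/(d-w)}
\]
for $0 \leq \lambda \leq 1$, and then specialize to $\lambda = 1/2$. Since $\psi_w(P_0) = \psi_w(P_1)$, the right-hand side collapses to $\psi_w(P_0)^{1/(d-w)}$, and raising both sides to the power $d-w$ gives $\psi_w(P_{1/2}) \geq \psi_w(P_0)$, the desired bound. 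The exponent $1/(d-w)$ is strictly below~$1$ exactly when $d \geq 2+w$ (for $d = 1+w$ the exponent equals~$1$ and the inequality degenerates to an identity), which is why the hypothesis takes this form and why the planar perimeter case must be excluded.

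For $w = 0$ the inequality above is precisely the classical Brunn--Minkowski theorem, the $d$-th-root concavity of volume, valid for every $d \geq 2$. I would invoke it together with its well-known equality characterization: for $0 < \lambda < 1$, equality holds if and only if $P_0$ and $P_1$ are homothets (one is a translate of a dilate of the other). This immediately yields both $\psi_0(P_{1/2}) \geq \psi_0(P_0)$ and the claim that equality forces $P_0$ and $P_1$ to be homothets.

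For $w = 1$ the functional is $\surf{\cdot}$, which up to a fixed dimensional constant equals the first quermassintegral~$W_1$. Here I would apply the generalized Brunn--Minkowski theorem for quermassintegrals, which gives the $(d-1)$-th-root concavity of surface area; this requires $d \geq 3$ so that $d - w = d - 1 \geq 2$ and the inequality is non-trivial. Specializing to $\lambda = 1/2$ as above produces $\surf{P_{1/2}} \geq \surf{P_0}$. When $d = 2$ the surface area is the perimeter, which is \emph{additive} under Minkowski sums, so the inequality becomes an identity and no homothety conclusion is available---consistent with the disk/Reuleaux example in the introduction, and the reason that case is handled separately by the elementary argument.

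The main obstacle I anticipate is the equality characterization in the surface-area case. Unlike the volume case, the equality conditions for Brunn--Minkowski--type inequalities among quermassintegrals are delicate, so I would need to cite a precise version (for instance, the treatment of the equality cases of these inequalities in Schneider's monograph on convex bodies) identifying homothety as the equality condition for the first quermassintegral of full-dimensional convex bodies. To apply it cleanly I would also assume, or reduce to the case, that $P_0$ and $P_1$ have nonempty interior, since the homothety characterization can fail for lower-dimensional bodies; this reduction is harmless here because degenerate shapes are never extremal among the bodies in $\KK(Q)$.
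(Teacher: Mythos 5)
Your proposal is correct and takes essentially the same route as the paper: the classical Brunn--Minkowski theorem (with its homothety equality condition) for $w=0$, and the generalized Brunn--Minkowski theorem for quermassintegrals---$(d-1)$-th-root concavity of $W_1$, hence of surface area, valid and non-degenerate exactly when $d\geq 3$---for $w=1$, with the equality characterization cited from Schneider's monograph. Your added caution about restricting to full-dimensional bodies is a reasonable refinement of a point the paper leaves implicit, but it does not change the argument.
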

For the volume case~$w = 0$, Lemma~\ref{lem:halfway} follows
immediately from the well-known Brunn-Minkowski
theorem~\cite[Theorem~6.1.1]{schneider2014convex}.  The general form
follows from the generalized Brunn-Minkowski Theorem for mixed
volumes.  We need some notions from the theory of mixed volumes, see
Busemann~\cite[Chapter~2]{busemann1958convex} for an introduction.

Minkowski has shown that for~$r$ convex shapes~$K_1, \dots, K_r
\subset \Reals^{d}$, the
volume of their linear combinations is a homogenous polynomial of
degree~$d$:
\[
\vol{\lambda_1 K_1 + \dots + \lambda_r K_r} =
\sum_{i_1=1}^r \sum_{i_2=1}^r \dots \sum_{i_d=1}^r V(K_{i_1}, K_{i_2}, \dots, K_{i_d})
\lambda_{i_1} \lambda_{i_2}\dots\lambda_{i_d},
\]
The coefficients~$V(K_{i_1},\dots,K_{i_d})$ are called \emph{mixed
  volumes}.  Setting~$r=1$ we see that~$V(K, \dots, K) = \vol{K}$.
The mixed volumes for~$r = 2$ and~$K_2 = B$, where~$B$ is the unit
ball in~$\Reals^{d}$, are known as \emph{quermassintegrals}, and
denoted
\begin{align*}
  W_0(K) & = V(K,\dots, K), \\
  W_1(K) & = V(K,\dots, K, B), \\
  W_m(K) & = V(K[d-m], B[m]), \qquad \text{for $m \in \{0, 1, \dots, d\}$}
\end{align*}
where the $P[m]$ notation means that argument~$P$ is repeated~$m$
times.

The \emph{generalized Brunn-Minkowski theorem} states that for $m \in
\{2, 3, \dots, d\}$ and convex shapes $K_0, K_1, C_1, \dots, C_{d-m}
\subset \Reals^d$, the function
\[
f(\lambda) := \Big(V(K_\lambda[m], C_1, C_2, \dots, C_{d-m}) \Big)^{1/m}
\]
is a concave function on the interval~$[0, 1]$, where 
$K_\lambda=(1-\lambda) K_0 + \lambda K_1$. (The Brunn-Minkowski
theorem is the special case~$m = d$.)
Busemann~\cite[pg.~49--50]{busemann1958convex} gives a short proof
using the Aleksandrov-Fenchel inequality.  When $C_1, \dots, C_{d-m}$
are sufficiently smooth, then the function~$\lambda \mapsto
f(\lambda)$ is a linear function only when~$K_0$ and~$K_1$ are
homothets~\cite[Theorems~6.4.4 and~6.6.9]{schneider2014convex}.  This
applies in particular to the quermassintegral case, and we obtain the
following: For $m \in \{2, 3, \dots d\}$ and convex shapes~$K_0$
and~$K_1$, the function
\[
f(\lambda) := \Big(W_{d-m}(K_\lambda) \Big)^{1/m}
\]
is concave on the interval~$[0, 1]$, and it is linear only when~$K_0$
and~$K_1$ are homothets. In particular, if $W_{d-m}(K_0)=W_{d-m}(K_1)$,
then $f(\lambda)$ is concave on the interval~$[0, 1]$, and it is constant
only when~$K_0$ and~$K_1$ are homothets.

Lemma~\ref{lem:halfway} follows from this by observing that~$\psi_0(K)
= \vol{K} = W_0(K)$ and $\psi_1(K) = \surf{K} = d
W_1(K)$~\cite[pg.~210]{schneider2014convex}.

\section{The main theorem when the optimum is unique}

Our proof strategy is to consider an optimal shape~$P$ and argue that
if~$P$ is not a spherical ball, then there is a shape~$P'$ with larger
volume or surface area.   For this argument to go through, it is
therefore necessary to first argue that an optimal shape does indeed
exist.

\begin{lemma}
  \label{lem:exist}
  Let~$Q$ be a given convex shape in~$\Reals^{d}$, for $d \geq 2$, and
  let~$w \in \{0, 1\}$.  Then there exists~$R \in \KK(Q)$ such that
  for any~$P \in \KK(Q)$ we have $\psi_w(P) \leq \psi_w(R)$.
\end{lemma}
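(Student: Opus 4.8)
The plan is a standard compactness argument in the space of convex bodies equipped with the Hausdorff metric. First I would verify that the supremum $s := \sup_{P \in \KK(Q)} \psi_w(P)$ is finite and strictly positive. Every $P \in \KK(Q)$ can in particular be placed in~$Q$ under the identity rotation, so up to translation $P$ is a subset of~$Q$; by monotonicity of volume and surface area under inclusion of convex bodies this gives $\psi_w(P) \leq \psi_w(Q)$, whence $s < \infty$. On the other hand, the largest inscribed ball~$B$ of~$Q$ lies in~$\KK(Q)$, since a ball is rotation invariant and fits in~$Q$, so $s \geq \psi_w(B) > 0$.

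Next I would choose a maximizing sequence~$(P_n)$ in~$\KK(Q)$ with $\psi_w(P_n) \to s$. Because both $\KK(Q)$ and $\psi_w$ are invariant under translation, I may translate each~$P_n$ so that it is contained in the fixed ball of radius $\mathrm{diam}(Q)$ about the origin. The Blaschke selection theorem then furnishes a subsequence, still written~$(P_n)$, converging in the Hausdorff metric to a nonempty compact convex set~$R$. Since $\psi_0 = \vol{\cdot}$ and $\psi_1 = \surf{\cdot}$ are continuous on convex bodies with respect to Hausdorff convergence, I would conclude $\psi_w(R) = \lim_n \psi_w(P_n) = s$; in particular $\psi_w(R) > 0$, so~$R$ is a genuine convex shape.

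The crux is to show $R \in \KK(Q)$, that is, that placeability in every orientation survives the Hausdorff limit. I would fix an arbitrary rotation~$\rho$; for each~$n$ there is a translation~$t_n$ with $\rho P_n + t_n \subseteq Q$. The vectors~$t_n$ stay bounded: picking any $x_n \in P_n$, the point $\rho x_n + t_n$ lies in the bounded set~$Q$ while $|\rho x_n| = |x_n| \leq \mathrm{diam}(Q)$, so $|t_n|$ is controlled. Passing to a further subsequence gives $t_n \to t$. As the map $K \mapsto \rho K + t_n$ is continuous for the Hausdorff metric and $t_n \to t$, I obtain $\rho P_n + t_n \to \rho R + t$ in the Hausdorff metric. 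Each $\rho P_n + t_n$ is contained in the closed set~$Q$, and a Hausdorff limit of sets lying in a closed set again lies in that set, so $\rho R + t \subseteq Q$. Since~$\rho$ was arbitrary, $R \in \KK(Q)$, and $R$ is the desired maximizer.

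The main obstacle is precisely this last step: one must rule out the witnessing translations escaping to infinity and must verify that inclusion in~$Q$ is preserved under the limit. Both are handled by the boundedness and closedness of~$Q$, so once the compactness framework and the continuity of~$\psi_w$ are in place the argument is routine.
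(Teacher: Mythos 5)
Your proposal is correct and follows essentially the same route as the paper's proof: a translated maximizing sequence in a fixed ball, Blaschke's selection theorem, continuity of $\psi_w$ under Hausdorff convergence, and, for each fixed rotation, extraction of convergent witnessing translations so that the inclusion in the compact set~$Q$ passes to the limit. The only cosmetic differences are that you verify finiteness and positivity of the supremum explicitly and bound the translations by picking a point $x_n \in P_n$, whereas the paper anchors the origin inside each $K_i$ so that $t_i \in Q$ directly; the substance is identical.
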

\begin{proof}
  Let $\omega = \sup_{P \in \KK(Q)} \psi_w(P)$. For any~$i > 0$, we
  can choose $K_i \in \KK(Q)$ with $\psi_w(K_i) > \omega - 1/i$ and
  such that the origin lies in~$K_i$.  This implies that all~$K_i$ are
  contained in a ball centered at the origin whose radius is the
  diameter of~$Q$.

  By Blaschke's selection theorem~\cite{kelly1979geometry}, there is a
  subsequence~$(K_{i_j})$ that converges in the Hausdorff-sense to
  some compact convex shape~$K$.  For simplicity of presentation, we
  let $(K_i)$ denote this converging subsequence.

  By continuity of~$\psi_w$, we have~$\psi_w(K) = \omega$.  To prove
  the lemma, it now suffices to prove that~$K \in \KK(Q)$, that is,
  that~$K$ can be placed inside~$Q$ in any given orientation~$\rho$.

  We fix some rotation~$\rho$.  Since~$K_i \in \KK(Q)$, there is a
  vector~$t_i \in \Reals^{d}$ such that~$\rho K_i + t_i \subset Q$.
  Since the origin lies in~$K_i$, we have $t_i \in Q$.  Since~$Q$ is
  compact, this implies that the sequence~$(t_i)$ contains a
  subsequence converging to some vector~$t \in Q$.  Let $(t_i)$ again
  denote this subsequence, so that we have
  \begin{itemize}
  \item $\lim t_i = t \in Q$;
  \item $K_i$ converges to~$K$ in the Hausdorff-sense.
  \end{itemize}
  Let $a_i$ be the Hausdorff-distance of~$K_i$ and~$K$, and let~$b_i =
  |t_i - t|$.  It follows that the Hausdorff-distance of $\rho K_i +
  t_i$ and~$\rho K + t$ is at most~$a_i + b_i$, which implies
  that~$\rho K_i + t_i$ converges in the Hausdorff-sense to~$\rho K +
  t$.  Since $\rho K_i + t_i \subseteq Q$ and $Q$ is compact, this
  implies that $\rho K + t \subseteq Q$, so $K$ can be placed in~$Q$
  in orientation~$\rho$.
\end{proof}

We can now prove the main theorem:
\begin{theorem}
  \label{thm:main-volume}
  Let~$Q$ be a given convex shape in~$\Reals^{d}$, for $d \geq 2$,
  let~$D$ be the largest spherical ball inscribed to~$Q$, and let~$P
  \neq D$ be a convex shape that can be placed in~$Q$ in every
  orientation.  Then $\vol{P} < \vol{D}$.  If $d \geq 3$, then we also
  have $\surf{P} < \surf{D}$.
\end{theorem}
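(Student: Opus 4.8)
The plan is to combine the three lemmas with a rotational symmetrization argument. Throughout I work with $w=0$ (the volume claim) and, additionally, $w=1$ when $d\geq 3$ (the surface claim); note that the dimension hypothesis $d\geq 2+w$ of Lemma~\ref{lem:halfway} is precisely what separates the two cases. Set $\omega=\sup_{P'\in\KK(Q)}\psi_w(P')$; by Lemma~\ref{lem:exist} this supremum is attained by some maximizer $R\in\KK(Q)$.

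First I would show that every maximizer $R$ must be a ball. Fix an arbitrary rotation~$\rho$. Then $\rho R$ again lies in $\KK(Q)$, since placing $\rho R$ in orientation~$\sigma$ amounts to placing $R$ in orientation~$\sigma\rho$, and $\psi_w(\rho R)=\psi_w(R)$ because $\psi_w$ is invariant under rigid motions. Applying Lemma~\ref{lem:freely} to $R$ and $\rho R$ shows $R_{1/2}=\tfrac12 R+\tfrac12\rho R\in\KK(Q)$, whence $\psi_w(R_{1/2})\leq\omega=\psi_w(R)$. On the other hand, Lemma~\ref{lem:halfway} applies because $\psi_w(R)=\psi_w(\rho R)$ and $d\geq 2+w$, giving $\psi_w(R_{1/2})\geq\psi_w(R)$ with equality \emph{only} when $R$ and $\rho R$ are homothets. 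Hence equality holds, so $R$ and $\rho R$ are homothets; comparing $\psi_w$-values, which scale by the $(d-w)$-th power of the homothety ratio with $d-w\geq 2$, forces the ratio to be~$1$. Thus $\rho R$ is merely a translate of~$R$.

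It remains to deduce ``ball'' from the statement that $\rho R$ is a translate of~$R$ for \emph{every} rotation~$\rho$. For this I would pass to the centroid~$c$ of~$R$: by equivariance of the centroid, $\rho R=R+(\rho c-c)$, so that $\rho(R-c)=R-c$ for all~$\rho$. A compact convex set invariant under every rotation about the origin is a ball centered at the origin, so $R$ is a ball. Since a ball lies in $\KK(Q)$ exactly when it fits inside~$Q$, and the largest such ball is~$D$, strict monotonicity of $\psi_w$ in the radius, together with $\psi_w(R)=\omega\geq\psi_w(D)$ and $\mathrm{radius}(R)\leq\mathrm{radius}(D)$, forces $R$ to have the inradius. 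Therefore $\omega=\psi_w(D)$ and $D$ is itself a maximizer.

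Finally, for the strict inequality, let $P\in\KK(Q)$ with $P\neq D$. If $\psi_w(P)\geq\psi_w(D)=\omega$, then $P$ is a maximizer, hence a ball by the argument above, and since $\psi_w(P)=\psi_w(D)$ it is a ball of the inradius, thus congruent to~$D$ --- contradicting $P\neq D$. Hence $\psi_w(P)<\psi_w(D)$, as claimed. I expect the main obstacle to be stating the final ``invariant under all rotations $\Rightarrow$ ball'' step cleanly; the remainder is bookkeeping with the lemmas, though one must take care that the equality case of Lemma~\ref{lem:halfway} genuinely applies to the pair $R,\rho R$ for \emph{every} $\rho$, as this is exactly what upgrades ``maximizer'' to ``ball.''
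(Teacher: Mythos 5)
Your proof is correct and follows essentially the same route as the paper: take a maximizer from Lemma~\ref{lem:exist}, average it with a rotated copy of itself via Lemma~\ref{lem:freely}, and use the equality case of Lemma~\ref{lem:halfway} to force the maximizer to be a ball, hence the inscribed ball~$D$. The only difference is that you make explicit the steps the paper leaves implicit, namely that equal $\psi_w$-values force the homothety ratio to be~$1$, and the centroid argument showing that a shape all of whose rotations are translates of itself must be a ball.
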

\begin{proof}
  Let~$w \in \{0, 1\}$.  By Lemma~\ref{lem:exist}, there exists $P \in
  \KK(Q)$ that maximizes~$\psi_w(P)$.  If~$P$ is not a ball, then
  there is a rotation~$\rho$ such that $P$ and~$\rho P$ are not
  homothets.  But then Lemma~\ref{lem:halfway} implies that
  $\psi_w(\frac 12 (P + \rho P)) > \psi_w(P)$.  On the other hand,
  since~$P, \rho P \in\KK(Q)$, Lemma~\ref{lem:freely} implies that
  $\frac 12(P + \rho P) \in \KK(Q)$, a contradiction to the assumption
  that~$P$ maximized~$\psi_w(P)$.
\end{proof}


\section{Largest perimeter}

It remains to discuss the case of maximizing the perimeter in the
plane.  It is well known (and follows for instance from the
Cauchy-Crofton formula) that $\peri{P+Q} = \peri{P} + \peri{Q}$
for any planar convex shapes~$P$ and~$Q$.

We fix an even integer~$\mu$, and define~$\rho$ to be the rotation around
the origin by angle~$\frac{2\pi}{\mu}$.  For a planar convex
shape~$P$, we define the \emph{$\mu$-average}~$\Ps$ of~$P$ to be the
set
\[
\Ps := \frac{1}{\mu} \sum_{k=0}^{\mu-1} \rho^{k} P.
\]

\begin{theorem}
  \label{thm:minkowski-average}
  Let~$P$ and~$Q$ be planar convex shapes such that $\rho^{k} P$ can
  be translated into~$Q$ for every~$k \in \{0, 1, \dots, \mu-1\}$.
  Then $\Ps$ can be translated into~$Q$, and we have $\peri{\Ps} =
  \peri{P}$.
\end{theorem}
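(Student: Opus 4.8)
The plan is to establish the two assertions separately: first the perimeter identity $\peri{\Ps} = \peri{P}$, which is a short computation, and then that $\Ps$ admits a translation into $Q$, which is where the (modest) work lies.

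For the perimeter identity, I would combine three facts about the planar perimeter functional: it is additive under Minkowski sums (as recalled above), it is positively homogeneous of degree one, so that $\peri{\lambda K} = \lambda\,\peri{K}$ for $\lambda \geq 0$, and it is invariant under rotations, so $\peri{\rho^k P} = \peri{P}$ for every~$k$. Applying these in turn to the definition of $\Ps$ gives
\[
\peri{\Ps} = \frac{1}{\mu}\,\peri{\textstyle\sum_{k=0}^{\mu-1}\rho^k P} = \frac{1}{\mu}\sum_{k=0}^{\mu-1}\peri{\rho^k P} = \frac{1}{\mu}\cdot\mu\,\peri{P} = \peri{P}.
\]

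For the containment, I would first observe that Lemma~\ref{lem:fits} extends verbatim from two sets to any finite family: if convex shapes $K_0,\dots,K_{\mu-1}$ all lie in the convex set~$Q$ and $\lambda_0,\dots,\lambda_{\mu-1}\geq 0$ sum to~$1$, then $\sum_k \lambda_k K_k \subseteq Q$, since every point of the combination is a convex combination of points of~$Q$. By hypothesis there are translation vectors $t_0,\dots,t_{\mu-1}$ with $\rho^k P + t_k \subseteq Q$ for each~$k$. Setting $t := \frac{1}{\mu}\sum_k t_k$ and using that translating each Minkowski summand translates the whole sum by the sum of the translation vectors, I would rewrite
\[
\Ps + t = \frac{1}{\mu}\sum_{k=0}^{\mu-1}\rho^k P + \frac{1}{\mu}\sum_{k=0}^{\mu-1} t_k = \frac{1}{\mu}\sum_{k=0}^{\mu-1}\big(\rho^k P + t_k\big),
\]
which is a Minkowski average of the sets $\rho^k P + t_k \subseteq Q$. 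The extended lemma then yields $\Ps + t \subseteq Q$, so $\Ps$ can be translated into~$Q$.

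The only step requiring any care -- and hence the closest thing to an obstacle -- is justifying the algebraic rewriting in the last display: one must check that uniform scaling by $1/\mu$, Minkowski summation over~$k$, and translation by the individual $t_k$ interact as claimed. These inclusions are elementary once one recalls that scaling distributes over Minkowski sums and that a translation of a summand passes through the sum, and I would verify them pointwise. Notably, the hypothesis that $\mu$ be even plays no role in this theorem; it is needed only in the subsequent argument.
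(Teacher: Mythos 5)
Your proposal is correct and takes essentially the same approach as the paper: where the paper proves the containment by induction on the partial averages $P_i = \frac{1}{i}\sum_{k=0}^{i-1}\rho^k P$, reducing each step to the binary Lemma~\ref{lem:fits} and carrying the perimeter identity along the induction, you prove the $\mu$-ary convex-combination lemma directly, average the translation vectors explicitly, and compute the perimeter in one pass using the same additivity, homogeneity, and rotation-invariance of the perimeter. The two arguments are identical in substance---yours simply flattens the paper's induction into a single $\mu$-ary step (and your side remark that evenness of $\mu$ is not needed here is also correct).
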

\begin{proof}
  For $i \in \{1, 2, \dots, \mu\}$ we define
  \[
  P_{i} := \frac{1}{i} \sum_{k = 0}^{i - 1} \rho^{k} P.
  \]
  We will prove by induction that $P_{i}$ can be translated into~$Q$
  and that $\peri{P_{i}} = \peri{P}$. Since~$\Ps = P_{\mu}$, this
  implies the theorem.

  The base case is $i = 1$, where $P_{i} = P_{1} = P$.  Assume now
  that $i \in \{2, \dots \mu\}$ and that the statement holds
  for~$P_{i-1}$.  We observe that
  \[
  P_{i} = \frac{1}{i} \rho^{i} P + \frac{i - 1}{i}
  P_{i-1} = (1-\lambda) \rho^{i} P + \lambda P_{i-1} \qquad
  \text{with} \quad \lambda = 1 - \frac{1}{i}.
  \]
  Since $\rho^{i} P$ and~$P_{i-1}$ can be translated into~$Q$,
  Lemma~\ref{lem:fits} implies that $P_{i}$ can be translated
  into~$Q$. We have $\peri{P_{i}} = (1-\lambda) \peri{\rho^{i} P} +
  \lambda \peri{P_{i-1}} = \peri{P}$.
\end{proof}

Let now~$\Km$ denote a regular convex $\mu$-gon.  We denote the edges
of~$\Km$ in counter-clockwise order as $e_{1}, e_{2}, \dots, e_{\mu}$.
A convex polygon~$P$ is called a \emph{$\mu$-polygon} if every edge
of~$P$ is parallel to an edge of~$\Km$.  We represent a
$\mu$-polygon~$P$ as a vector~$\phi(P) = (a_{1}, a_{2}, \dots,
a_{\mu})$ in~$\Reals^{\mu}$, where $a_{i}$ is the length of the edge
of~$P$ with the same outward normal as~$e_{i}$.  The vector~$\phi(P)$
represents~$P$ uniquely up to translations.

We observe the following:
\begin{itemize}
\item $\phi(\alpha P) = \alpha \phi(P)$ for $\alpha > 0$;
\item $\phi(P + R) = \phi(P) + \phi(R)$;
\item If $\phi(P) = (a_1, \dots, a_\mu)$ and $\rho$ is a rotation with
  rotation angle~$\frac{2\pi}{\mu}$, then $\phi(\rho P) = (a_\mu, a_1,
  a_2, \dots, a_{\mu-1})$.
\end{itemize}

As a consequence, the $\mu$-average~$\Ps$ of~$P$ has vector
\[
\phi(\Ps) = (r, r, r, \dots, r) \qquad \text{where} \quad
r = \frac{1}{\mu} \sum_{k=0}^{\mu} a_{k} = \frac{\peri{P}}{\mu}.
\]
In other words, $\Ps$~is a regular convex~$\mu$-gon.

\begin{theorem}
  \label{thm:main-perimeter}
  Let~$Q$ be a given planar convex shape, let~$D$ be the largest disk
  inscribed to~$Q$, and let~$P \in \KK(Q)$.  Then $\peri{P} \leq
  \peri{D}$.
\end{theorem}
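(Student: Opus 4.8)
The plan is to take the $\mu$-average of $P$ for larger and larger even~$\mu$ and to show that in the limit it becomes a disk inscribed in~$Q$. First I fix an even integer~$\mu$ and let~$\rho$ be the rotation by~$\frac{2\pi}{\mu}$. Since $P \in \KK(Q)$, every rotated copy $\rho^k P$ can be translated into~$Q$, so Theorem~\ref{thm:minkowski-average} applies: the $\mu$-average~$\Ps$ can be translated into~$Q$ and satisfies $\peri{\Ps} = \peri{P}$. By construction $\Ps$ is invariant under~$\rho$, that is, it has $\mu$-fold rotational symmetry about a center~$c_\mu$; as $\mu$ is even, $\Ps$ is centrally symmetric, so $c_\mu$ is its centroid and lies in~$\Ps$. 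Fixing a translate of~$\Ps$ contained in~$Q$, all these averages lie in the compact set~$Q$, contain their centers $c_\mu \in Q$, and have the fixed perimeter~$\peri{P}$.

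Next I let $\mu \to \infty$ through even integers and extract a limit shape. As in the proof of Lemma~\ref{lem:exist}, Blaschke's selection theorem yields a subsequence along which $\Ps$ converges in the Hausdorff sense to a convex body $K \subseteq Q$, and along which the centers $c_\mu$ converge to some $c \in Q$. Since the perimeter is continuous under Hausdorff convergence, $\peri{K} = \peri{P}$. The crucial point is that $K$ is a disk: for each target angle $\alpha \in [0, 2\pi)$ I choose integers $k_\mu$ with $\frac{2\pi k_\mu}{\mu} \to \alpha$; the rotation of~$\Ps$ by $\frac{2\pi k_\mu}{\mu}$ about~$c_\mu$ fixes~$\Ps$, and these rigid motions converge to the rotation by~$\alpha$ about~$c$, which therefore fixes~$K$. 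Hence $K$ is invariant under every rotation about~$c$, and a convex body with this property is a disk centered at~$c$.

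Finally, $K$ is a disk contained in~$Q$, so its radius is at most that of the largest inscribed disk~$D$, which gives $\peri{K} \leq \peri{D}$; combined with $\peri{K} = \peri{P}$ this is the claim.

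I expect the main obstacle to be the middle step: rigorously passing to the limit and arguing that the limit is rotationally symmetric. One must check that the symmetry angles $\{\tfrac{2\pi k}{\mu}\}$ become dense as $\mu \to \infty$, that the centers~$c_\mu$ remain in the compact set~$Q$ so that a convergent subsequence exists, and that Hausdorff convergence of the shapes together with convergence of the symmetry motions forces the limit to inherit every rotational symmetry. The remaining ingredients---that perimeter is continuous and additive, that $\Ps$ fits in~$Q$ with unchanged perimeter, and that a rotationally invariant convex body is a disk---are either already established in Theorem~\ref{thm:minkowski-average} or entirely elementary.
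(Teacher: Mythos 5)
Your proof is correct, but it follows a genuinely different route from the paper's. Both arguments rest on Theorem~\ref{thm:minkowski-average}; they diverge in how they extract a disk. The paper stays finitary: it inscribes a $\mu$-polygon $R \subset P$ with $\peri{R} > \peri{P} - \eps/2$, uses the edge-length vector $\phi$ to show that the $\mu$-average $\Rs$ is \emph{exactly} a regular $\mu$-gon, and notes that for $\mu$ large a regular $\mu$-gon contains a disk $D'$ with $\peri{D'} > \peri{\Rs} - \eps/2$; since $D'$ lies in $Q$ it cannot beat $D$, and the $\eps$-bookkeeping gives a contradiction. You instead average $P$ itself, so $\Ps$ is only known to have $\mu$-fold rotational symmetry, and you produce the disk in the limit $\mu \to \infty$: Blaschke selection (already in the paper's toolkit, see Lemma~\ref{lem:exist}) plus continuity of perimeter under Hausdorff convergence yields a convex limit $K \subseteq Q$ with $\peri{K} = \peri{P}$, and density of the symmetry angles forces $K$ to be invariant under every rotation about the limit center, hence a disk. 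The limit steps you flag as the main obstacle are all sound: if rigid motions $g_\mu$ converge uniformly on compact sets to $g$ and $K_\mu \to K$ in Hausdorff distance, then $g_\mu K_\mu \to g K$, and since $g_\mu K_\mu = K_\mu$ this gives $gK = K$; moreover each center $c_\mu$ is the centroid of the orbit of any point of the translated $\Ps$, hence lies in $\Ps \subseteq Q$, so a convergent subsequence of centers exists. Comparing the two: the paper's proof avoids all compactness and limit machinery in this theorem (the regular-$\mu$-gon fact and the $\eps/2$ bookkeeping do all the work) but needs the $\mu$-polygon formalism and argues by contradiction; yours bypasses $\mu$-polygons and the representation $\phi$ entirely and establishes a cleaner, slightly stronger fact---for every $P \in \KK(Q)$ there is an actual disk $K \subseteq Q$ with $\peri{K} = \peri{P}$---from which $\peri{P} \leq \peri{D}$ follows directly, at the price of a heavier analytic argument.
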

\begin{proof}
  Assume the claim is false, so that we have~$P \in \KK(Q)$ with
  $\peri{P} > \peri{D}$.  Let $\eps := \peri{P} - \peri{D} > 0$.  If
  we choose $\mu$ large enough, then there is a $\mu$-polygon~$R
  \subset P$ with $\peri{P} - \peri{R} < \eps/2$.  We consider its
  $\mu$-average~$\Rs$. By Theorem~\ref{thm:minkowski-average} we can
  translate $\Rs$ into~$Q$, and have $\peri{\Rs} = \peri{R} \geq
  \peri{P} - \eps/2$.  Since~$\Rs$ is a regular convex~$\mu$-gon, we
  can ensure---by making~$\mu$ large enough---that there is a disk~$D'
  \subset \Rs$ with $\peri{\Rs} - \peri{D'} < \eps/2$.  This implies
  that $\peri{P} - \peri{D'} < \eps$. Since $D' \subset \Rs \subset
  P$, the disk~$D'$ cannot be larger than~$D$ and so we have $\peri{P}
  - \peri{D} < \eps$, a contradiction to the choice of~$\eps$.
\end{proof}


\begin{thebibliography}{10}

\bibitem{bae_et_al:LIPIcs:2018:8719}
Sang~Won Bae, Sergio Cabello, Otfried Cheong, Yoonsung Choi, Fabian Stehn, and
  Sang~Duk Yoon.
\newblock {The Reverse Kakeya Problem}.
\newblock In Bettina Speckmann and Csaba~D. T{\'o}th, editors, {\em 34th
  International Symposium on Computational Geometry (SoCG 2018)}, volume~99 of
  {\em Leibniz International Proceedings in Informatics (LIPIcs)}, pages
  6:1--6:13, Dagstuhl, Germany, 2018. Schloss Dagstuhl--Leibniz-Zentrum fuer
  Informatik.

\bibitem{besicovitch1920}
A.~S. Besicovitch.
\newblock Sur deux questions de l'int\'egrabilit\'e.
\newblock {\em Journal de la Soci\'et\'e des Math. et de Phys.}, II, 1920.

\bibitem{besicovitch1928}
A.~S. Besicovitch.
\newblock On {K}akeya's problem and a similar one.
\newblock {\em Math. Zeitschrift}, 27:312--320, 1928.

\bibitem{Bourgain2000}
J.~Bourgain.
\newblock Harmonic analysis and combinatorics: How much may they contribute to
  each other?
\newblock In V.~I. Arnold, M.~Atiyah, P.~Lax, and B.~Mazur, editors, {\em
  Mathematics: Frontiers and Perspectives}, pages 13--32. American Math.
  Society, 2000.

\bibitem{busemann1958convex}
H.~Busemann.
\newblock {\em Convex Surfaces}.
\newblock Interscience tracts in pure and applied mathematics. Interscience
  Publishers, 1958.

\bibitem{kakeya1917}
S.~Kakeya.
\newblock Some problems on maxima and minima regarding ovals.
\newblock {\em The Science Report of the Tohoku Imperial University, Series~1,
  Mathematics, Physics, Chemistry}, 6:71--88, 1917.

\bibitem{kelly1979geometry}
Paul~Joseph Kelly and Max~L Weiss.
\newblock {\em Geometry and convexity: a study in mathematical methods}.
\newblock Wiley New York, 1979.

\bibitem{Laba2008}
I.~Laba.
\newblock From harmonic analysis to arithmetic combinatorics.
\newblock {\em Bulletin (New Series) of the AMS}, 45:77--115, 2008.

\bibitem{pal1921}
G.~P\'al.
\newblock Ein {M}inimumproblem f{\"{u}}r {O}vale.
\newblock {\em Math. Ann.}, 83:311--319, 1921.

\bibitem{schneider2014convex}
R.~Schneider.
\newblock {\em Convex Bodies: The Brunn--Minkowski Theory}.
\newblock Encyclopedia of Mathematics and its Applications. Cambridge
  University Press, 2014.

\bibitem{tao01}
T.~Tao.
\newblock From rotating needles to stability of waves: Emerging connections
  between combinatorics, analysis and {PDE}.
\newblock {\em Notices of the AMS}, 48:297--303, 2001.

\bibitem{Wolff1999}
T.~Wolff.
\newblock Recent work connected with the {K}akeya problem.
\newblock In H.~Rossi, editor, {\em Prospects in Mathematics}. American Math.
  Society, 1999.

\end{thebibliography}

\end{document}